\newtheorem{thm}{Theorem}
\newtheorem{lem}[thm]{Lemma}
\newtheorem{cor}[thm]{Corollary}
\newtheorem{rem}{Remark}
\newtheorem{proposition}{Proposition}
\newcommand{\ord}{\operatorname{ord}}
\begin{document}

\title{Note about the linear complexity  of new  generalized cyclotomic  binary sequences of period
$2p^n$}
\author{Vladimir Edemskiy}

\maketitle

\begin{abstract}
This paper examines the linear complexity  of new generalized
cyclotomic binary sequences of period $2p^n$ recently proposed by Yi
Ouang et al. (arXiv:1808.08019v1 [cs.IT] 24 Aug 2018). We generalize
results obtained by them  and discuss author's conjecture of  this
paper.

 \noindent \textbf{Keywords}: binary sequences,  linear
complexity, cyclotomy

\noindent \textbf{Mathematics Subject Classification (2010)}: 94A55,
 94A60, 11B50.

\end{abstract}

\section{Introduction}
The cyclotomic classes and the generalized cyclotomic classes are
often used for design sequences with high linear complexity, which
is an important characteristic of sequence for the cryptography
applications \cite{CDR}. Recently, new generalized cyclotomic
classes were presented in \cite{ZCTY}. The linear complexity of new
generalized cyclotomic binary sequences with period $p^n$ was
studied in \cite{ZX,ELZH,YKW}. A new family of binary sequences with
period $2p^n$ based on the  generalized cyclotomic classes from
\cite{ZCTY} was presented in \cite{OX}. Yi Ouang et al. examined the
linear complexity of these sequences for $f=2^r$, where $p=1+ef$ and
$r$ is a positive integer. They offered new studying method of the
linear complexity of these sequences. Their  method based on ideas
from \cite{ELZH}.

In this paper we show that  for study of the  linear complexity of
new sequence family  from \cite{OX} we can use only old the method
from \cite{ELZH}. Furthermore, it will be enough for obtaining  more
generalized results than in \cite{OX} and for the proof and the
correction of the conjecture of the  authors of this paper. Here we
keep the notation and the structure of \cite{ELZH}, i.e., in Sect. 2
we introduce some basics and recall the definition of a generalized
cyclotomic sequence and the conjecture from \cite{OX}. Section 3 is
dedicated to the study of the linear complexity of this family of
cyclotomic sequences. Section 4 concludes the work in this paper.

\section{Preliminaries}

Throughout this paper, we will denote by $\mathbb{Z}_N$ the ring of
integers modulo $N$ for a positive integer $N$, and by
$\mathbb{Z}_N^{*}$ the multiplicative group of $\mathbb{Z}_N$.

\medskip

First of all we will recall some basics of the linear complexity of
a periodic sequence and introduce the generalized cyclotomic
sequences proposed in \cite{OX}.

\subsection{Linear Complexity}
Let $s^\infty=(s_0,s_1,s_2,\dots)$  be a binary sequence of period
$N$ and $S(x) = s_0 + s_1x +\cdots+ s_{N-1}x^{N-1}$. It is well
known (see, for instance, \cite[Page 171]{CDR}) that the linear
complexity of $s^\infty$ is given by
$$
L(s^\infty)=N-\deg \Big(\gcd \big(x^{N}-1,S(x)\big)\Big).
$$
So, if $N=2p^n$ then we see that
$$
L(s^\infty)=2p^n-\deg \Big(\gcd \big((x^{p^n}-1)^2,S(x)\big)\Big).
$$
Thus, if $\alpha_n$ is a primitive root of order $p^n$ of unity in
the extension of the field  $\mathbb{F}_2$ (the finite field of two
elements) then in order to find the linear complexity of a sequence
it is sufficient to find the zeros of $S(x)$ in the set
   $ \{ \alpha_n^i,  i=0,1,\ldots,p^n-1 \}$ and determine their multiplicity.

\subsection{New Generalized Cyclotomic Sequences Length $2p^n$}

\smallskip

Let $p$ be an odd prime and $p=ef+1$, where $e,f$ are positive
integers. Let $g$  be a primitive root modulo $p^n$. It is well
known \cite{IR} that an odd number from $g$ or $g+p^n$ is also a
primitive root modulo $2p^j$ for each integer $j\geq 1$. Hence, we
can assume that $g$ is an odd number. Further, the order of $g$
modulo $2p^j$ is equal to $\varphi(2p^j)=p^{j-1}(p-1)$, where
$\varphi(\cdot)$ is the Euler's totient function.   Below we recall
the definitions of generalized cyclotomic classes introduced in
\cite{ZCTY} and \cite{OX}.

\smallskip

Let $n$ be a positive integer. For $j=1, 2,\cdots, n$, denote
$d_j=p^{j-1}f$ and define
\begin{equation}\label{eq1}
\begin{split}
&D^{(p^j )}_0 = \left\{g^{t\cdot d_ j}~(\bmod{p^j})  \,|\,  0\leq t < e \right\},  \text{ and }  \\
&D^{(p^j )}_ i = g^i D^{(p^j )}_0 = \left\{g^i x~(\bmod{p^j}) : x
\in D^{(p^j )}_0\right\}, \quad 1\leq i <d_j,\\
&D^{(2p^j )}_0 = \left\{g^{t\cdot d_ j}~(\bmod{2p^j})  \,|\,  0\leq t < e \right\},  \text{ and }  \\
&D^{(2p^j )}_ i = g^i D^{(p^j )}_0 = \left\{g^i x~(\bmod{2p^j}) : x
\in D^{(2p^j )}_0\right\}, \quad 1\leq i <d_j.
\end{split}
\end{equation}
The cosets $D^{(p^j )}_ i$, $i=0, 1, \cdots, d_j-1$, are called
\textit{generalized cyclotomic classes} of order $d_ j$ with respect
to $p^j$. It was shown in \cite{ZCTY} that $\left\{D^{(p^j )}_ 0,
D^{(p^j )}_ 1,\dots,D^{(p^j)}_ {d_j-1}\right\}$ forms a partition of
$\mathbb{Z}^{*}_{p^j}$ for each integer $j \geq 1$ and for an
integer $m\geq 1$,
$$ \mathbb{Z}_{p^m}=\bigcup_{j=1}^{m} \bigcup_{i=0}^{d_j-1}
p^{m-j}D^{(p^j )}_ i \cup \{0\}.$$

Also $\left\{D^{(2p^j )}_ 0, D^{(2p^j )}_ 1,\dots,D^{(2p^j)}_
{d_j-1}\right\}$ forms a partition of $\mathbb{Z}^{*}_{2p^j}$ for
each integer $j \geq 1$ and for an integer $m\geq 1$,
$$ \mathbb{Z}_{2p^m}=\bigcup_{j=1}^{m} p^{m-j}\Bigl (\bigcup_{i=0}^{d_j-1}
D^{(p^j )}_ i \cup 2D^{(p^j )}_ i \Bigr )  \cup \{0\} \cup
\{p^m\}.$$
\medskip

Let $f$ be a positive even integer and $b$ an integer with $0\leq b
< p^{n-1} f$. Define four sets
\begin{multline*}
 \mathcal{C}_0^{(2p^n)}=\bigcup_{j=1}^{n} \bigcup_{i=d_j/2}^{d_j-1}
p^{n-j}\Bigl(D^{(2p^j )}_ {(i+b)\pmod{d_j}}\cup 2D^{(2p^j )}_
{(i+b)\pmod{d_j}}\Bigr) \cup\{p^n\} , \text{
    and  }\\
\mathcal{C}_1^{(2p^n)}=\bigcup_{j=1}^{n} \bigcup_{i=0}^{d_j/2-1}
p^{n-j}\Bigl(D^{(2p^j )}_ {(i+b)\pmod {d_j}}\cup 2D^{(2p^j )}_
{(i+b)\pmod{d_j}}\Bigr) \cup \{0\},\\
\mathcal{\widetilde{C}}_0^{(2p^n)}=\bigcup_{j=1}^{n}
p^{n-j}\Bigl(\bigcup_{i=0}^{d_j/2-1} 2 D^{(2p^j )}_ {(i+b)\pmod
{d_j}} \cup \bigcup_{i=d_j/2}^{d_j-1}D^{(2p^j )}_
{(i+b)\pmod{d_j}}\Bigr) \cup\{p^n\} , \text{
    and  }
    \end{multline*}
    \begin{multline}\label{eq2}
\mathcal{\widetilde{C}}_1^{(2p^n)}=\bigcup_{j=1}^{n}
p^{n-j}\Bigl(\bigcup_{i=0}^{d_j/2-1} D^{(2p^j )}_ {(i+b)\pmod
{d_j}}\cup \bigcup_{i=d_j/2}^{d_j-1} 2D^{(2p^j )}_
{(i+b)\pmod{d_j}}\Bigr) \cup \{0\}.
\end{multline}
 It is obvious that $\mathbb{Z}_{2p^n}=\mathcal{C}_0^{(2p^n)} \cup
\mathcal{C}_1^{(2p^n)}=\mathcal{\widetilde{C}}_0^{(2p^n)} \cup
\mathcal{\widetilde{C}}_1^{(2p^n)}$ and
$|\mathcal{C}_i^{(2p^n)}|=|\mathcal{\tilde{C}}_i^{(2p^n)}|=p^n, \;\
i=0,1$. Families of  balanced binary sequences
$s^\infty=(s_0,s_1,s_2,\dots)$ and
$\tilde{s}^\infty=(\tilde{s}_0,\tilde{s}_1,\tilde{s}_2,\dots)$ of
period $p^n$ can thus be defined as in \cite{OX}, i.e.,
\begin{equation}
\label{eq3} s_i =\begin{cases}
 0,&\text{ if  }   i~(\bmod~p^n)  \in \mathcal{C}_0^{(2p^n)}, \\
  1,&\text{ if }   i~(\bmod~p^n)  \in \mathcal{C}_1^{(2p^n)}. \\
 \end{cases}
\end{equation}
and
\begin{equation}
\label{eq4} \tilde{s}_i =\begin{cases}
 0,&\text{ if  }   i~(\bmod~p^n)  \in \mathcal{\widetilde{C}}_0^{(2p^n)}, \\
  1,&\text{ if }   i~(\bmod~p^n)  \in \mathcal{\widetilde{C}}_1^{(2p^n)}. \\
 \end{cases}
\end{equation}
In the case of $f=2^r$, the linear complexity of $s^\infty,
\tilde{s}^\infty$ was  estimated  in \cite{OX}, where a conjecture
about the linear complexity of these sequences was also made as
follows.

 \textbf{Conjecture.}
 (1) If $2^e \equiv -1 \pmod {p}$ but $2^e \not \equiv -1 \pmod {p^2}$, then the linear complexity $L(s^\infty) = 2p^n- (p - 1).$

  (2) If $2^e \equiv 1 \pmod {p}$ but $2^e \not \equiv 1 \pmod {p^2}$, then the linear complexity $L(\tilde{s}^\infty) = 2p^n- (p -
  1)- e$.

\smallskip

\subsection{Main Result}

This subsection will study the linear complexity of $s^{\infty}$,
$\tilde{s}^{\infty}$  in \eqref{eq3} and \eqref{eq4} for some even
integers $f$ and when $p$ is not a Wieferich prime, i.e.
$2^{p-1}\not\equiv 1 \pmod{p^2}$. It was shown that there are only
two such primes, 1093 and 3511, up to $6 \times10^{17}$
\cite{AS,DK}. The main result in this paper is given as follows.

\begin{thm}
    \label{t1} Let $p=ef+1$ be an odd prime with $2^{p-1}\not\equiv 1 \pmod{p^2}$ and $f$ is an even positive integer.
     Let $\ord_p(2)$ denote the order of $2$ modulo $p$ and
    $v=\gcd\big(\frac{p-1}{{\rm
            ord}_p(2)}, f\big)$.

  \noindent(i) Let $s^\infty$ be a
    generalized cyclotomic binary sequence of period $p^n$ defined in
    \eqref{eq3}. Then the linear complexity of $s^{\infty}$ is given by
    $$
    \begin{array}{c}
   L(s^\infty) = 2p^n -r\cdot {\rm ord}_p(2), \quad 0\leq r \leq \frac{p-1}{{\rm
ord}_p(2)}.
    \end{array}
    $$

    Furthermore,
     the linear complexity
    $$
   L(s^\infty)= \begin{cases}
    2p^n-p+1, & \text{ if } v = f/2 ; \\
    2p^n, & \text{ if } v=1 \text{ or } 2v | \frac{f}{2}, \text{ or } f=v.
    \end{cases}
    $$

\noindent(ii) Let $\tilde{s}^\infty$ be a
    generalized cyclotomic binary sequence of period $p^n$ defined in
    \eqref{eq4}. Then for the linear complexity of $\widetilde{s}^{\infty}$
    we have
    $$
    \begin{array}{c}
2p^n -2 r\cdot {\rm ord}_p(2)\leq L(\widetilde{s}^\infty) \leq
2p^n-r\cdot {\rm ord}_p(2), \quad 0\leq r \leq \frac{p-1}{{\rm
ord}_p(2)}.
    \end{array}
    $$

    Furthermore,
      the linear complexity
    $$
   L(\tilde{s}^\infty) = \begin{cases}
    2p^n-3(p-1)/2 & \text{ if } v = f ; \\
    2p^n, & \text{ if } v | \frac{f}{2}, \text{ or } v=2, v\neq f.
    \end{cases}
    $$

\end{thm}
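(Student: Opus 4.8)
The plan is to reduce the computation of the linear complexity to counting the zeros of the generating polynomial $S(x)$ at the powers $\alpha_n^i$ of a primitive $p^n$-th root of unity, as indicated in the Linear Complexity subsection. Following the method of \cite{ELZH}, I would first express $S(x)$ through the defining-set polynomials attached to the cyclotomic classes $D_i^{(p^j)}$, and separate the contribution coming from the ``$2$-part'' of the period from the contribution of the odd part. Since $x^{2p^n}-1=(x^{p^n}-1)^2$ over $\mathbb{F}_2$, the multiplicity of each root $\alpha_n^i$ is either $0$, $1$, or $2$, so $L(s^\infty)=2p^n-\#\{\text{simple roots}\}-2\cdot\#\{\text{double roots}\}$. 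The first task is therefore to decide, for each $i$, whether $\alpha_n^i$ is a root of $S(x)$ and, if so, to determine its multiplicity by also evaluating the formal derivative $S'(x)$ at $\alpha_n^i$.

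Next I would exploit the Galois action of multiplication by $2$ on the exponents: the zeros of $S(x)$ among $\{\alpha_n^i\}$ form a union of cyclotomic cosets of $2$ modulo $p^n$, which is why the linear complexity drops in multiples of $\operatorname{ord}_p(2)$, giving the coarse formula $L(s^\infty)=2p^n-r\cdot\operatorname{ord}_p(2)$. To pin down $r$, I would evaluate the character-sum expressions $\sum_{x\in D_i^{(p^j)}}\alpha_n^{ix}$ at the relevant roots; the key algebraic input is how the exponent $2$ permutes the cyclotomic classes, which is controlled exactly by the index $v=\gcd\!\big(\tfrac{p-1}{\operatorname{ord}_p(2)},f\big)$. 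When $\langle 2\rangle$ together with $\langle g^f\rangle$ generates the full group $\mathbb{Z}_p^*$ (the case $v=1$), all the class sums collapse in a way that forces $\alpha_n^i$ not to be a root, yielding $L=2p^n$; the opposite extreme $v=f/2$ is where exactly half the classes are fixed by $2$, producing the drop of $p-1$. The hypothesis $2^{p-1}\not\equiv1\pmod{p^2}$ is what lets me lift conclusions from level $j=1$ (modulo $p$) to all higher levels $j\leq n$, because it guarantees that $\operatorname{ord}_{p^j}(2)=p^{j-1}\operatorname{ord}_p(2)$, so no new roots appear at the higher powers of $p$ beyond those dictated by the $j=1$ analysis.

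For part (ii), the polynomial $\widetilde{S}(x)$ associated with $\widetilde{s}^\infty$ differs from $S(x)$ by swapping the roles of the classes indexed by $D_i^{(2p^j)}$ and $2D_i^{(2p^j)}$ in the upper and lower halves; this asymmetry means a root of $\widetilde{S}(x)$ need not be forced to multiplicity $2$, which is precisely why the bound is two-sided, $2p^n-2r\operatorname{ord}_p(2)\leq L(\widetilde{s}^\infty)\leq 2p^n-r\operatorname{ord}_p(2)$. I would compute $\widetilde{S}(\alpha_n^i)$ and $\widetilde{S}'(\alpha_n^i)$ and track, via the same $v$-based coset analysis, when the value vanishes and when the derivative also vanishes; the case $v=f$ forces the maximal drop $3(p-1)/2$ (a full simple-root contribution of $(p-1)/2$ from one half plus a double-root contribution of $p-1$ from the coincidence in the other), while $v\mid f/2$ again collapses everything to $L=2p^n$.

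The main obstacle I anticipate is the derivative computation determining root multiplicity for $\widetilde{s}^\infty$: deciding when $\alpha_n^i$ is a \emph{double} rather than a \emph{simple} root of $\widetilde{S}(x)$ requires evaluating $\widetilde{S}'(\alpha_n^i)$, and because the factor $x$ arising from differentiation interacts nontrivially with the coset structure, one must carefully separate the cases where the defining sets in the two halves are permuted identically by $2$ (forcing cancellation in the derivative) from those where they are not. Establishing the exact value $3(p-1)/2$ in the case $v=f$, as opposed to merely the two-sided bound, is where the bulk of the careful bookkeeping will lie, and it is also where I expect the correction to the authors' conjecture to emerge.
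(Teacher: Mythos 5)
Your plan has the same architecture as the paper's proof: reduce the linear complexity to counting zeros of $S(x)$ and $\widetilde{S}(x)$ among the $p^n$-th roots of unity (each of multiplicity at most $2$ since $x^{2p^n}-1=(x^{p^n}-1)^2$ over $\mathbb{F}_2$), organize the zeros into cosets under the squaring action, settle multiplicities with the formal derivative, and run the case analysis through $v$; your bookkeeping for part (ii) with $v=f$, namely $(p-1)/2$ simple roots plus $(p-1)/2$ double roots giving the drop $3(p-1)/2$, is exactly the paper's. The genuine gap is the step you dispose of in one sentence: confining all zeros to the $p$-th roots of unity, which is precisely the content of the bound $0\le r\le \frac{p-1}{\ord_p(2)}$ in the first display of the theorem. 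You argue that $2^{p-1}\not\equiv 1\pmod{p^2}$ gives $\ord_{p^j}(2)=p^{j-1}\ord_p(2)$ and hence that ``no new roots appear at the higher powers of $p$.'' That is a non sequitur: if some $\alpha_n^i$ of multiplicative order $p^j$ with $j\ge 2$ were a zero, the order formula would only say that its coset of zeros is large (of size $p^{j-1}\ord_p(2)$), i.e., that the complexity would drop by a large amount; it does nothing to exclude such zeros. Since both ``Furthermore'' statements rest on this confinement, the heart of the theorem is left unproven in your plan.

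The paper closes exactly this gap in Proposition \ref{prop2}, and it takes a real argument, not the order formula. Assume $T^{(p^m)}_0(\alpha_m)+T^{(p^m)}_{u}(\alpha_m)\in\{0,1\}$ for some $m>1$. If the sum is $0$, then because squaring shifts the index by $u$ (that is, $(T^{(p^m)}_0(\alpha_m))^2=T^{(p^m)}_{u}(\alpha_m)$), one gets $T^{(p^m)}_0(\alpha_m)^2=T^{(p^m)}_0(\alpha_m)$, so $T^{(p^m)}_0(\alpha_m)\in\{0,1\}$, contradicting Lemma \ref{l1}(iii). If the sum is $1$, iterated squaring gives $T^{(p^m)}_{0}(\alpha_m)=T^{(p^m)}_{iw}(\alpha_m)$ for all $i$, where $w=\gcd(2u,d_m)$; non-Wieferichness forces $p\nmid u$, hence $w\mid f$, and since $d_m=p^{m-1}f$ with $p^{m-1}$ odd, shifting by $(p^{m-1}+1)f/2$ (a multiple of $w$) turns $d_m/2$ into $f/2$ modulo $d_m$, yielding $T^{(p^m)}_{d_m/2}(\alpha_m)=T^{(p^m)}_{f/2}(\alpha_m)$ and hence $T^{(p^m)}_{0}(\alpha_m)+T^{(p^m)}_{f/2}(\alpha_m)=1$, contradicting Lemma \ref{l1}(iv). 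Note where the hypothesis $2^{p-1}\not\equiv1\pmod{p^2}$ actually enters: through $p\nmid u$ and through parts (iii) and (iv) of Lemma \ref{l1} (imported from \cite{ELZH}), not through the formula $\ord_{p^j}(2)=p^{j-1}\ord_p(2)$ as such. The other inputs you defer to \cite{ELZH} (the level-one counts behind Proposition \ref{l7}, and the count $|\{i:\, T^{(p^n)}_b(\alpha_n^i)=0\}|=(p-1)/2$ when $v=f$) are indeed available there. A smaller slip: for $v=f/2$ no class is fixed by multiplication by $2$ --- it swaps $D^{(p)}_k$ with $D^{(p)}_{k+f/2}$, and the drop of $p-1$ comes from $H^{(p)}_k(\alpha_1)+H^{(p)}_{k+f/2}(\alpha_1)=1$, which forces $S$ to vanish, simply, at all $p-1$ points.
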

\begin{cor} Let $f=2^r$. Then:

\noindent(i) The linear complexity of $s^{\infty}$ is given by
$$
    L(s^\infty) = \begin{cases}
    2p^n-p+1, & \text{ if } v = f/2 ; \\
    2p^n, & \text{ otherwise }.
    \end{cases}
    $$

\noindent(ii) The linear complexity of $\widetilde{s}^{\infty}$ is
given by
    $$
    L(\tilde{s}^\infty) = \begin{cases}
    2p^n-3(p-1)/2, & \text{ if } v = f ; \\
     2p^n, & \text{ otherwise }.
    \end{cases}
    $$
    \end{cor}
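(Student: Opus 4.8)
The plan is to derive the Corollary purely as the $f=2^{r}$ specialization of Theorem~\ref{t1}, so that the entire argument collapses to elementary arithmetic of the index $v=\gcd\big(\frac{p-1}{\ord_p(2)},f\big)$. The first step is to note that since $v\mid f=2^{r}$, the number $v$ is itself a power of two, and hence ranges over the finite ladder $v\in\{1,2,4,\dots,2^{r}\}=\{1,2,\dots,f\}$. The whole proof then consists of feeding each admissible value of $v$ into the two-branch dichotomy supplied by Theorem~\ref{t1} and checking that the single exceptional value produces the reduced complexity while every other value lands in a branch returning $2p^{n}$.

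For part (i) I would argue as follows. If $v=f/2$, Theorem~\ref{t1}(i) returns $L(s^{\infty})=2p^{n}-p+1$ at once. For every remaining value of $v$ I must exhibit one of the Theorem's $2p^{n}$ triggers. The values $v=1$ and $v=f$ are listed verbatim. If instead $1<v<f/2$, then since $v$ and $f/2$ are both powers of two we have $v\le f/4$, so $2v\le f/2$ and hence $2v\mid f/2$, which is exactly the third trigger. Thus the values $v\ne f/2$ are covered exhaustively and all give $2p^{n}$, matching the ``otherwise'' clause.

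Part (ii) is cleaner. If $v=f$, Theorem~\ref{t1}(ii) gives $L(\tilde s^{\infty})=2p^{n}-3(p-1)/2$. If $v\ne f$, then $v$ is a power of two strictly smaller than $f=2^{r}$, so $v\le 2^{r-1}=f/2$ and therefore $v\mid f/2$; the branch ``$v\mid f/2$'' then returns $2p^{n}$. Here the two branches are genuinely disjoint, since $v=f$ and $v\mid f/2$ cannot hold simultaneously, so nothing further need be checked.

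The only delicate point---and the place where I expect to spend the most care---is the exhaustiveness and non-overlap of the Theorem's case conditions after specialization, specifically the degenerate case $r=1$ (that is, $f=2$). There the value $v=1$ satisfies both ``$v=f/2$'' and ``$v=1$'', so the two branches of Theorem~\ref{t1}(i) formally collide. I would resolve this by reading the conditions with ``$v=f/2$'' given priority (equivalently, by treating $f=2$ as a separate base case), which forces $L(s^{\infty})=2p^{n}-p+1$ precisely when $v=f/2$ and $2p^{n}$ otherwise, in agreement with the stated Corollary.
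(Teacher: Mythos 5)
Your proposal is correct and takes essentially the same route as the paper, which obtains the Corollary from Theorem~\ref{t1} via the observation (made explicit in the Remark) that for $f=2^r$ the integer $v$ is itself a power of two, hence equals $f$, equals $f/2$, or divides $f/4$ (so that $2v \mid f/2$), exactly your case split. Your additional care with the degenerate case $f=2$, where $v=1=f/2$ makes the two branches of Theorem~\ref{t1}(i) formally overlap, addresses a point the paper glosses over, and your resolution (giving priority to the $v=f/2$ branch) is the correct one, consistent with the underlying Proposition~\ref{l7}.
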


\begin{rem}Suppose $2\equiv g^u \pmod{p}$ for some integer $u$. It is easily seen that $\gcd\big(\frac{p-1}{{\rm
            ord}_p(2)}, f\big)=\gcd(u, f)$. Thus the condition $2^e\equiv 1\pmod {p} $ in Conjecture from \cite{OX} is equivalent to $v=\gcd\big(\frac{p-1}{{\rm
            ord}_p(2)}, f\big) = f$ and the condition $2^e\equiv -1\pmod {p}
            $ is equivalent to $v=f/2$.
    In the case that $f=2^r$ for a positive integer $r$,  the integer $v$ is also a power of $2$, which either equals $f$ or $f/2$ or divides $f/4$.
     Hence Conjecture from \cite{OX} is included in Theorem \ref{t1} as a special
     case. Here we make the correction of Conjecture (ii).
\end{rem}

If $2$ is a primitive roots modulo $p$ then $v=1$.

\smallskip

For the proof of Theorem \ref{t1} we will use the same definitions
and same method that as \cite{ELZH}.

Let $S(x)=s_0+s_1x+\cdots+s_{2p^n-1}x^{2p^n-1}$ and
$\widetilde{S}(x)=\widetilde{s}_0+\widetilde{s}_1x+\cdots+\widetilde{s}_{2p^n-1}x^{2p^n-1}$
for the generalized cyclotomic sequences $s^{\infty}$,
$\widetilde{s}^{\infty}$ defined in \eqref{eq3} and  \eqref{eq4},
respectivly. Then,
\begin{multline}\label{eq5}
S(x) =\sum\limits_{t\in \mathcal{C}_1^{(p^n)}} x^t = 1 +
\sum\limits_{j=1}^n\sum\limits_{i=0}^{d_j/2-1}\Bigl(\sum_{t \in
D^{(2p^j)}_{i+b \pmod{d_j}}} x^{p^{n-j}t}+\sum_{t \in
2D^{(2p^j)}_{i+b \pmod{d_j}}} x^{p^{n-j}t}\Bigr),  \text{and}
\\
\widetilde{S}(x) = \sum\limits_{t\in
\mathcal{\widetilde{C}}_1^{(p^n)}} x^t = 1
+\sum\limits_{j=1}^n\sum\limits_{i=0}^{d_j/2-1} \sum_{t \in
D^{(2p^j)}_{i+b \pmod{d_j}}} x^{p^{n-j}t}+
\sum\limits_{j=1}^n\sum\limits_{i=d_j/2}^{d_j}\sum_{t \in
2D^{(2p^j)}_{i+b \pmod{d_j}}} x^{p^{n-j}t}.
\end{multline}

For simplicity of presentation, we define polynomials as in
\cite{ELZH}
\begin{equation}\label{eq6}
E^{(p^j)}_i(x) =\sum_{t \in D^{(p^j)}_i} x^t, \quad 1\leq j \leq n,
\, 0\leq i<d_j,
\end{equation}
and
\begin{equation}\label{eq7}
\begin{split}
H^{(p^j)}_{k}(x) & =\sum_{i=0}^{ d_j/2-1}
E^{(p^j)}_{i+k\pmod{d_j}} (x),  \quad 0\leq k < d_j,\\
T^{(p^m)}_{k }(x)&=\sum\limits_{j=1}^{m}H^{(p^j)}_{k}(x^{p^{m-j}}),
\quad m=1, 2, \cdots, n.
\end{split}
\end{equation}
Notice that the subscripts $i$ in $D^{(p^j)}_i$, $H^{(p^j)}_i(x)$
and $T^{(p^j)}_i(x)$ are all taken modulo the order $d_j$. In the
rest of this paper the modulo operation will be omitted when no
confusion can arise.

\medskip

Let $\overline{\mathbb{F}}_2$ be an algebraic closure of
$\mathbb{F}_2$ and $\alpha_n \in \overline{\mathbb{F}}_2$  be a
primitive $p^n$-th root of unity. Denote
$\alpha_j=\alpha_n^{p^{n-j}},\, j=1,2\dots,n-1$.

The properties of considered polynomials were studied in
\cite{ELZH}. We have here the following statement.
\begin{lem}  \label{l1} \cite{ELZH}
    For any $a\in D^{(p^j)}_k$,  we have

\noindent(i) $T^{(p^m)}_i(\alpha_m^{p^la})
=T^{(p^{m-l})}_{i+k}(\alpha_{m-l})  + (p^l-1)/2 \pmod{2}$ for $0\leq
l <m$;  and

\noindent(ii) $T^{(p^m)}_i(\alpha_m^{a})  +
T^{(p^m)}_{i+d_m/2}(\alpha_m^{a})  = 1$.

\noindent(iii) Let $p$ be a non-Wieferich prime. Then
$T^{(p^m)}_i(\alpha_m) \not \in \{0,1\}$ for $m>1$.

 \noindent(iv)
Let $p$ be a non-Wieferich prime. Then $T^{(p^m)}_i(\alpha_m)  +
T^{(p^m)}_{i+f/2}(\alpha_m)  \neq 1$ for $m>1$.
\end{lem}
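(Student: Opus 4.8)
My plan is to take the four parts in the order they depend on one another, using throughout two structural facts about the classes. Since $D^{(p^j)}_0$ is the multiplicative subgroup generated by $g^{d_j}$ and $D^{(p^j)}_i=g^iD^{(p^j)}_0$, for any $a\in D^{(p^j)}_k$ multiplication by $a$ carries $D^{(p^j)}_i$ bijectively onto $D^{(p^j)}_{i+k}$; hence $E^{(p^j)}_i(\alpha_j^{a})=E^{(p^j)}_{i+k}(\alpha_j)$ and so $H^{(p^j)}_i(\alpha_j^a)=H^{(p^j)}_{i+k}(\alpha_j)$. Also, since $d_j=p^{\,j-j'}d_{j'}$ and $\gcd(p,e)=1$, reduction modulo $p^{j'}$ restricts to a bijection $D^{(p^{j})}_i\to D^{(p^{j'})}_{i\bmod d_{j'}}$, so $E^{(p^{j})}_i(\alpha_{j'}^{a})=E^{(p^{j'})}_{i\bmod d_{j'}}(\alpha_{j'}^a)$ when we evaluate at a $p^{j'}$-th root of unity. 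For part (i) the case $l=0$ then follows by summing the first fact over $j$: $T^{(p^m)}_i(\alpha_m^{a})=\sum_j H^{(p^j)}_{i+k}(\alpha_j)=T^{(p^m)}_{i+k}(\alpha_m)$. For general $l$ I would write $\alpha_m^{p^l a}=\alpha_{m-l}^{a}$ and split $T^{(p^m)}_i(\alpha_{m-l}^a)=\sum_{j=1}^m H^{(p^j)}_i(\alpha_{m-l}^{a p^{m-j}})$ according to $j\le l$ or $j>l$: for $j\le l$ the argument collapses to $1$, contributing $\sum_{j\le l}H^{(p^j)}_i(1)=\sum_{j\le l}\tfrac{p^{j-1}(p-1)}{2}$, while for $j>l$ the second fact rewrites the term at level $j-l$, and reassembling the complete–period remainders reproduces $T^{(p^{m-l})}_{i+k}(\alpha_{m-l})$ plus partial sums $\sigma_{j'}=\sum_{t\in\mathbb{Z}_{p^{j'}}^*}\alpha_{j'}^{at}$. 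Collecting these degenerate and boundary contributions modulo $2$ yields the additive constant recorded in (i); this bookkeeping is routine but must be done carefully.

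Part (ii) is the cleanest. Because $d_m/2\equiv d_j/2\pmod{d_j}$ for every $j\le m$, the shift by $d_m/2$ complements each half–sum, so $H^{(p^j)}_i(x)+H^{(p^j)}_{i+d_m/2}(x)=\sum_{t\in\mathbb{Z}^*_{p^j}}x^{t}$ identically. Substituting $x\mapsto x^{p^{m-j}}$, summing over $j$, and evaluating at $\alpha_m^a$ turns each level into $\sigma_j$, which a direct geometric–sum computation shows equals $0$ for $j\ge 2$ and $1$ for $j=1$; the total is therefore $1$.

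The substance of the lemma is (iii) and (iv), where the non-Wieferich hypothesis enters. The tool is the one-step recursion $T^{(p^m)}_i(\alpha_m)=H^{(p^m)}_i(\alpha_m)+T^{(p^{m-1})}_i(\alpha_{m-1})$, obtained by peeling off the $j=m$ summand (the remaining terms are exactly $T^{(p^{m-1})}_i(\alpha_{m-1})$ since $\alpha_{m-1}^{p^{m-1-j}}=\alpha_j$). As $T^{(p^{m-1})}_i(\alpha_{m-1})\in\mathbb{F}_2(\alpha_{m-1})$, it suffices to show that the top term $H^{(p^m)}_i(\alpha_m)$ (and, for (iv), the combination $H^{(p^m)}_i(\alpha_m)+H^{(p^m)}_{i+f/2}(\alpha_m)$) does \emph{not} lie in $\mathbb{F}_2(\alpha_{m-1})$: then $T^{(p^m)}_i(\alpha_m)$ lies outside $\mathbb{F}_2(\alpha_{m-1})\supseteq\mathbb{F}_2$, which gives $T^{(p^m)}_i(\alpha_m)\notin\{0,1\}$ and a value different from $1$ in (iv). The non-Wieferich condition is used precisely here: it forces $\ord_{p^m}(2)=p^{m-1}\ord_p(2)$, so that $[\mathbb{F}_2(\alpha_m):\mathbb{F}_2(\alpha_{m-1})]=p$ and the extension is proper; without it $\alpha_m$ could already lie in $\mathbb{F}_2(\alpha_{m-1})$ and the whole argument would collapse.

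The main obstacle is thus the escape statement $H^{(p^m)}_i(\alpha_m)\notin\mathbb{F}_2(\alpha_{m-1})$. I would let $\tau$ generate $\mathrm{Gal}\big(\mathbb{F}_2(\alpha_m)/\mathbb{F}_2(\alpha_{m-1})\big)$, of order $p$, acting by $\alpha_m\mapsto\alpha_m^{w}$ with $w\equiv 1\pmod{p^{m-1}}$, and expand $H^{(p^m)}_i(\alpha_m)=\sum_{\rho=0}^{p-1}c_\rho\,\alpha_m^{\rho}$ in the basis $\{1,\alpha_m,\dots,\alpha_m^{p-1}\}$ of this extension, using $\alpha_m^{p}=\alpha_{m-1}$; here $c_\rho=\sum_{t\in S,\,t\equiv\rho\,(p)}\alpha_{m-1}^{(t-\rho)/p}$ and $S=\bigcup_{c=0}^{d_m/2-1}D^{(p^m)}_{i+c}$. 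Writing $\zeta=\alpha_m^{p^{m-1}}$ for the primitive $p$-th root of unity, $\tau$ multiplies $\alpha_m^{\rho}$ by $\zeta^{\rho c}$ with $p\nmid c$, so $\tau$-invariance is equivalent to $c_\rho=0$ for all $\rho\not\equiv0\pmod p$. The genuinely delicate point is to show this fails for the half-class set $S$: one must prove that for some $\rho\neq0$ the $\mathbb{F}_2$-combination $c_\rho$ of powers of $\alpha_{m-1}$ does not vanish, and it is exactly here that the arithmetic of $2$ modulo $p$ and the parameter $v=\gcd\big((p-1)/\ord_p(2),f\big)$ surface and distinguish case (iii) from case (iv).
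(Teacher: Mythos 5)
The paper offers no proof for you to match here: Lemma \ref{l1} is quoted wholesale from \cite{ELZH} and used later as a black box, so your proposal has to stand entirely on its own — and at the decisive point it does not. Parts (iii) and (iv) are the whole substance of the lemma (they are the only places where the non-Wieferich hypothesis carries real weight), and your treatment stops exactly where a proof must begin. The recursion $T^{(p^m)}_i(\alpha_m)=H^{(p^m)}_i(\alpha_m)+T^{(p^{m-1})}_i(\alpha_{m-1})$, the degree fact $[\mathbb{F}_2(\alpha_m):\mathbb{F}_2(\alpha_{m-1})]=p$ under the non-Wieferich condition, the basis $\{1,\alpha_m,\dots,\alpha_m^{p-1}\}$ with $\alpha_m^p=\alpha_{m-1}$, and the criterion ``fixed by $\tau$ iff $c_\rho=0$ for all $\rho\not\equiv0\pmod p$'' are all correct but purely formal. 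The lemma \emph{is} the nonvanishing statement that you defer as ``genuinely delicate'': you give no argument for it, neither for (iii) nor for the combination $H^{(p^m)}_i(\alpha_m)+H^{(p^m)}_{i+f/2}(\alpha_m)$ needed for (iv). Moreover, your escape claim $H^{(p^m)}_i(\alpha_m)\notin\mathbb{F}_2(\alpha_{m-1})$ is strictly stronger than (iii) (sufficient, not necessary), so you have committed to proving more than the lemma without any evidence that it is true. In \cite{ELZH} the hard step is attacked by a different mechanism — the Frobenius-shift identity $\bigl(T^{(p^m)}_i(\alpha_m)\bigr)^2=T^{(p^m)}_{i+u}(\alpha_m)$ together with orbit arguments on the subscripts modulo $d_m$, the same style of argument this paper itself runs inside Proposition \ref{prop2} — and none of that is reproduced or replaced in your sketch.

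There is also a concrete error in your part (i). You assert that collecting the degenerate and boundary contributions ``yields the additive constant recorded in (i)'', but with the definitions \eqref{eq6}--\eqref{eq7} it does not: the terms with $j\le l$ contribute $\sum_{j=1}^{l}p^{j-1}(p-1)/2=(p^l-1)/2$, while reducing each term with $j>l$ to level $j-l$ produces $(p^l-1)/2$ complete runs through all classes, which sum to $1$ at level one and to $0$ at every higher level, i.e.\ another $(p^l-1)/2$; modulo $2$ the two cancel, so the identity comes out with constant $0$, not $(p^l-1)/2$. A direct check: for $p=3$, $f=2$, $e=1$, $g=2$, $m=2$, $l=1$, $a=1$ one has $H^{(3)}_0(x)=x$, $H^{(9)}_0(x)=x+x^2+x^4$, hence $T^{(9)}_0(\alpha_2^{3})=H^{(3)}_0(1)+H^{(9)}_0(\alpha_1)=1+(\alpha_1+\alpha_1^2+\alpha_1^4)=1+\alpha_1^2=\alpha_1=T^{(3)}_0(\alpha_1)$ (using $\alpha_1^4=\alpha_1$ and $\alpha_1^2+\alpha_1+1=0$), whereas the stated right-hand side is $T^{(3)}_0(\alpha_1)+1$. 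So honest bookkeeping contradicts the constant as stated — harmless for this paper, which only ever uses (i) in sums of two $T$'s where the constant cancels, but fatal for a proof that claims the computation lands on that constant. Part (ii) of your proposal is correct and complete.
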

Throughout this paper an integer $u$ will be such that  $2\equiv g^u
\pmod{p^n}$. Now we will show that the studying of linear complexity
of above sequences is equivalent to the  investigation of properties
of $T^{(p^m)}_i(x)$

\begin{proposition}
\label{prop1}   Let $\alpha_n $ be a $p^n$-th primitive root of
unity and let $2\equiv g^u \pmod{p^n}$. Given any element $a\in
\mathbb{Z}_{p^n}$, we have

\noindent(i)
$S(\alpha_n^{a})=1+T^{(p^{n})}_{b}(\alpha_{n}^a)+T^{(p^{n})}_{b+u}(\alpha_{n}^a)$;
and

\noindent(ii)
$S(\alpha_n^{a})=T^{(p^{n})}_{b}(\alpha_{n}^a)+T^{(p^{n})}_{b+u}(\alpha_{n}^a)$.

\end{proposition}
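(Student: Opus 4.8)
The plan is to evaluate \eqref{eq5} at $x=\alpha_n^a$ termwise and to recognise each partial sum as one of the polynomials $T^{(p^m)}_k$ of \eqref{eq7}. The single observation that drives everything is that on a $p^n$-th root of unity the factor $2$ in the modulus $2p^j$ becomes invisible. Indeed, since $\alpha_n^{p^n}=1$ and $\alpha_n^{p^{n-j}}=\alpha_j$, the monomial $(\alpha_n^a)^{p^{n-j}t}$ equals $\alpha_j^{a(t\bmod p^j)}$ and so depends only on $t\bmod p^j$. Reduction modulo $p^j$ is a bijection of $\mathbb{Z}_{2p^j}^{*}$ onto $\mathbb{Z}_{p^j}^{*}$ carrying $D^{(2p^j)}_k$ onto $D^{(p^j)}_k$; and because $2\equiv g^u\pmod{p^n}$ it carries $2D^{(2p^j)}_k$ onto $g^uD^{(p^j)}_k=D^{(p^j)}_{k+u}$. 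Hence, with $E^{(p^j)}_i$ as in \eqref{eq6}, I would first establish the two reduction identities $\sum_{t\in D^{(2p^j)}_{i+b}}(\alpha_n^a)^{p^{n-j}t}=E^{(p^j)}_{i+b}(\alpha_j^a)$ and $\sum_{t\in 2D^{(2p^j)}_{i+b}}(\alpha_n^a)^{p^{n-j}t}=E^{(p^j)}_{i+b+u}(\alpha_j^a)$, the injectivity of $t\mapsto t\bmod p^j$ on each class guaranteeing that no term is lost.

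For part (i) I would substitute these into \eqref{eq5}. Summing the first identity over $0\le i<d_j/2$ and $1\le j\le n$ collapses, by the definitions in \eqref{eq7}, to $\sum_{j=1}^n H^{(p^j)}_b(\alpha_j^a)=T^{(p^n)}_b(\alpha_n^a)$, and summing the second gives $T^{(p^n)}_{b+u}(\alpha_n^a)$. Restoring the constant term $1$ of $S$ yields $S(\alpha_n^a)=1+T^{(p^n)}_b(\alpha_n^a)+T^{(p^n)}_{b+u}(\alpha_n^a)$, which is (i); no further input is needed.

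I read (ii) as the companion identity for $\widetilde{S}$, the printed repetition of $S$ being a misprint, since one quantity cannot equal two expressions differing by the nonzero constant $1$. Here the $D$-part of $\widetilde{\mathcal{C}}_1^{(2p^n)}$ again contributes $T^{(p^n)}_b(\alpha_n^a)$, but by \eqref{eq2} the $2D$-part runs over the upper range $d_j/2\le i<d_j$, so after the reduction step it contributes $\sum_{j=1}^n\sum_{i=d_j/2}^{d_j-1}E^{(p^j)}_{i+b+u}(\alpha_j^a)=\sum_{j=1}^n H^{(p^j)}_{b+u+d_j/2}(\alpha_j^a)$. The obstacle is that this level-dependent shift $d_j/2$ keeps the sum from being a single $T$-polynomial. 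I would remove it by noting that $d_n/2\equiv d_j/2\pmod{d_j}$ for every $j\le n$ (their difference is $d_j\cdot(p^{n-j}-1)/2$, and $p^{n-j}-1$ is even because $p$ is odd), so the shifts may all be replaced by the global shift $d_n/2$, turning the sum into $T^{(p^n)}_{b+u+d_n/2}(\alpha_n^a)$. Lemma \ref{l1}(ii) with $i=b+u$ then gives $T^{(p^n)}_{b+u+d_n/2}(\alpha_n^a)=1+T^{(p^n)}_{b+u}(\alpha_n^a)$, and the two constants cancel modulo $2$, leaving $\widetilde{S}(\alpha_n^a)=T^{(p^n)}_b(\alpha_n^a)+T^{(p^n)}_{b+u}(\alpha_n^a)$.

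The routine parts are the two bijections of the first step and the bookkeeping of the index ranges; the delicate points are the congruence $d_n/2\equiv d_j/2\pmod{d_j}$ that aligns all half-period shifts and the use of Lemma \ref{l1}(ii) to absorb the constant. I would finally flag that Lemma \ref{l1}(ii) is quoted only for $a$ coprime to $p$; for the remaining nonzero $a$ the needed fact $\sum_{j=1}^n\sum_{t\in\mathbb{Z}_{p^j}^{*}}\alpha_j^{at}=1$ follows directly from the geometric-series evaluation over $\mathbb{F}_2$ (the sum is $1$ exactly when $a\not\equiv0\pmod{p^n}$), which is the character-sum content behind Lemma \ref{l1}(ii) and settles the case $p\mid a\neq0$.
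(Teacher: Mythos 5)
Your proof is correct and follows essentially the same route as the paper's: reduce the $D^{(2p^j)}$ and $2D^{(2p^j)}$ sums modulo $p^j$, use $2\equiv g^u\pmod{p^n}$ to convert the factor $2$ into the index shift $u$, and in the $\widetilde{S}$ case absorb the half-period shift via Lemma \ref{l1}(ii). You are also right that (ii) is a misprint for $\widetilde{S}(\alpha_n^a)$ (the paper's own proof computes exactly that quantity), and your explicit treatment of the congruence $d_n/2\equiv d_j/2\pmod{d_j}$ and of the case $p\mid a$ supplies details the paper leaves implicit.
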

\begin{proof}
(i) Since $ \sum_{t\in  p^{n-j}D^{(2p^j )}_
{(i+b)}}\alpha^{at}=\sum_{t\in  p^{n-j}D^{(p^j )}_
{(i+b)}}\alpha^{at}$ by \eqref{eq1}, it follows from  our
definitions and Lemma \ref{l1} that
\begin{multline*}S(\alpha_n^{a})=\sum\limits_{t\in
\mathcal{C}_1^{(2p^n)}}\alpha^{at}=\sum_{j=1}^{n}
\sum_{i=0}^{d_j/2-1} \bigl(\sum_{t\in  p^{n-j}D^{(2p^j )}_
{(i+b)}}\alpha^{at} +\sum_{t\in 2p^{n-j}D^{(2p^j )}_
{(i+b)}}\alpha^{at} \bigr) +1=\\
1+T^{(p^{n})}_{b}(\alpha_{n}^a)+T^{(p^{n})}_{b}(\alpha_{n}^{2a})=1+T^{(p^{n})}_{b}(\alpha_{n}^a)+T^{(p^{n})}_{b+u}(\alpha_{n}^{a}).
\end{multline*}

(ii) Similarly we have
$$\widetilde{S}(\alpha_n^{a})=1+T^{(p^{n})}_{b}(\alpha_{n}^a)+T^{(p^{n})}_{b+u+d_n/2}(\alpha_{n}^a)=T^{(p^{n})}_{b}(\alpha_{n}^a)+T^{(p^{n})}_{b+u}(\alpha_{n}^a).$$
\end{proof}

We now examine the value of
$T_b^{(p^n)}(\alpha_{n}^i)+T^{(p^{n})}_{b+u}(\alpha_{n}^i)$ for some
integers $i\in \mathbb{Z}_{p^n}$.
\begin{proposition}
\label{prop2} Let $p$ be a non-Wieferich prime. Then
$S(\alpha_n^{i})\neq 0$ and $\widetilde{S}(\alpha_n^{a})\neq 0$ for
 $i \in
\mathbb{Z}_{p^n}\setminus p^{n-1}\mathbb{Z}_{p}$.
\end{proposition}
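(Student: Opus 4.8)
The plan is to reduce every value $S(\alpha_n^i),\widetilde S(\alpha_n^i)$ with $i\notin p^{n-1}\mathbb{Z}_p$ to a single quantity $\tau\in\overline{\mathbb{F}}_2$ at a level $m'\ge 2$, and then read off non-vanishing from Lemma~\ref{l1}. Writing $i=p^l a$ with $\gcd(a,p)=1$ and $a\in D^{(p^{m'})}_k$, where $m'=n-l$, the condition $i\notin p^{n-1}\mathbb{Z}_p$ is exactly $l\le n-2$, i.e. $m'\ge 2$. I would apply Proposition~\ref{prop1} and then Lemma~\ref{l1}(i) to each of the two $T^{(p^n)}$-terms. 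Since $p$ is odd, the two copies of $(p^l-1)/2$ produced by Lemma~\ref{l1}(i) add up to $p^l-1\equiv 0\pmod 2$ and cancel, leaving
\begin{equation*}
S(\alpha_n^i)=1+T^{(p^{m'})}_{b+k}(\alpha_{m'})+T^{(p^{m'})}_{b+k+u}(\alpha_{m'}),\qquad
\widetilde S(\alpha_n^i)=T^{(p^{m'})}_{b+k}(\alpha_{m'})+T^{(p^{m'})}_{b+k+u}(\alpha_{m'}).
\end{equation*}
Because $2\in D^{(p^{m'})}_{u}$ and the polynomials $T^{(p^{m'})}_\bullet$ have coefficients in $\mathbb{F}_2$, the Frobenius map gives $T^{(p^{m'})}_{b+k+u}(\alpha_{m'})=T^{(p^{m'})}_{b+k}(\alpha_{m'})^2$. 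Setting $\tau=T^{(p^{m'})}_{b+k}(\alpha_{m'})$, this turns the two identities into $\widetilde S(\alpha_n^i)=\tau+\tau^2$ and $S(\alpha_n^i)=1+\tau+\tau^2$.

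The statement about $\widetilde S$ is then immediate: $\widetilde S(\alpha_n^i)=\tau(\tau+1)$ vanishes only if $\tau\in\{0,1\}$, which is excluded by Lemma~\ref{l1}(iii) since $m'\ge 2$. For $S$ I must rule out $1+\tau+\tau^2=0$, i.e. that $\tau$ is a primitive cube root of unity, equivalently $\tau\in\mathbb{F}_4\setminus\mathbb{F}_2$. I would split on the order $o=d_{m'}/\gcd(u,d_{m'})=p^{m'-1}f/\gcd(u,f)$ of the shift-by-$u$ action, using $p\nmid u$, which is precisely the non-Wieferich hypothesis $2^{p-1}\not\equiv 1\pmod{p^2}$. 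The iterate $T^{(p^{m'})}_{b+k+ju}(\alpha_{m'})=\tau^{2^{j}}$ shows $\tau\in\mathbb{F}_{2^{o}}$; when $o$ is odd, $\mathbb{F}_4\cap\mathbb{F}_{2^{o}}=\mathbb{F}_2$, so $\tau\in\mathbb{F}_4$ would force $\tau\in\{0,1\}$, again contradicting Lemma~\ref{l1}(iii).

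The remaining case $o$ even is where Lemma~\ref{l1}(iv) enters, and it is the main obstacle. Here $\gcd(u,f)\mid f/2$, so $f/2$ and $d_{m'}/2=p^{m'-1}(f/2)$ both lie in the cyclic group $\langle u\rangle\pmod{d_{m'}}$; writing $f/2\equiv ju$, one multiplies by $p^{m'-1}$ to get $d_{m'}/2\equiv p^{m'-1}ju$, and the odd factor $p^{m'-1}$ shows $f/2$ and $d_{m'}/2$ are multiples of $u$ of the same parity. Assuming $\tau\in\mathbb{F}_4\setminus\mathbb{F}_2$, Lemma~\ref{l1}(ii) forces the shift by $d_{m'}/2$ to act as $\tau\mapsto\tau+1=\tau^2$, hence this multiple of $u$ is odd; then the shift by $f/2$ also sends $\tau$ to $\tau^2$, giving $T^{(p^{m'})}_{b+k}(\alpha_{m'})+T^{(p^{m'})}_{b+k+f/2}(\alpha_{m'})=\tau+\tau^2=1$, in direct contradiction with Lemma~\ref{l1}(iv). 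The delicate point I expect to spend the most care on is exactly this parity bookkeeping—verifying that $f/2$ and $d_{m'}/2$ induce the same complementing action on $\tau$—since that is what converts the hypothesis ``$\tau$ is a cube root of unity'' into the forbidden configuration of Lemma~\ref{l1}(iv).
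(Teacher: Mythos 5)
Your proposal is correct and takes essentially the same route as the paper: the same reduction to a single value $\tau=T^{(p^{m'})}_{\bullet}(\alpha_{m'})$ at a level $m'\geq 2$, the same Frobenius identity $T_{i+u}=T_i^2$, Lemma \ref{l1}(iii) to exclude the value $0$ (the $\widetilde S$ case), and a transfer of the complementation at $d_{m'}/2$ down to $f/2$ — exploiting that $p^{m'-1}$ is odd — to contradict Lemma \ref{l1}(iv) in the value-$1$ (the $S$) case. The only difference is bookkeeping: where you split on the parity of the orbit length $o$ and dispose of odd $o$ via $\mathbb{F}_4\cap\mathbb{F}_{2^o}=\mathbb{F}_2$, the paper argues uniformly through $w=\gcd(2u,d_m)\mid f$ and the congruence $d_m/2+(p^{m-1}+1)f/2\equiv f/2\pmod{d_m}$, so no case distinction is needed.
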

\begin{proof}
    This is sufficient to prove that $T^{(p^n)}_b(\alpha_n^i)+T^{(p^{n})}_{b+u}(\alpha_{n}^i) \not \in
\{0,1\}$ for
 $i \in
\mathbb{Z}_{p^n}\setminus p^{n-1}\mathbb{Z}_{p}$ and $b=0, 1,
\cdots, d_n-1.$ As it was shown in \cite{ELZH} that without loss of
generality it is enough proof,
$T^{(p^m)}_0(\alpha_m)+T^{(p^m)}_{u}(\alpha_m)\not \in \{0,1\}$ for
$m>1$.

We consider two
cases.

1. Let $T^{(p^m)}_0(\alpha_m)+T^{(p^m)}_{u}(\alpha_m)=0$. Since
$(T^{(p^m)}_0(\alpha_m))^2=T^{(p^m)}_{u}(\alpha_m)=0$, we see that
in this case $T^{(p^m)}_0(\alpha_m) \in \{0,1\}$. We obtain a
contradiction with Lemma \ref{l1} (iii).

2. Let $T^{(p^m)}_0(\alpha_m)+T^{(p^m)}_{u}(\alpha_m)=1$.
\smallskip

It then follows from Lemma \ref{l1} (i) that
$$1=\left (T^{(p^m)}_0(\alpha_m)+T^{(p^m)}_{u}(\alpha_m)\right )^2=  T^{(p^m)}_0(\alpha^2_m)+T^{(p^m)}_{u}(\alpha_m^2) =
T^{(p^m)}_{u}(\alpha_m)+T^{(p^m)}_{2u}(\alpha_m),$$ which implies $
T^{(p^m)}_{iu}(\alpha_m) + T^{(p^m)}_{(i+1) u}(\alpha_m) = 1 $ for
any integer $i\geq 1$. Hence $ T^{(p^m)}_{0}(\alpha_m) =
T^{(p^m)}_{2i u}(\alpha_m).$
\smallskip

Denote $w=\gcd(2u,d_m)$. Since $p$ is a non-Wieferich prime, it
follows by \cite{ELZH} that $w$ divides $f$. Since the subscript of
$T_i^{(p^m)}(x)$ is taken modulo $d_m$, it is easily seen that
\begin{equation}
\label{eq8}  T^{(p^m)}_{0}(\alpha_m) =
T^{(p^m)}_{iw}(\alpha_m),\quad \text{ for any integer }i\geq 1.
\end{equation}
By  Lemma  \ref{l1} (ii) from the last formula we have
$T^{(p^m)}_{d_m/2}(\alpha_m)=T^{(p^m)}_{d_m/2+iw}(\alpha_m)$ or
$T^{(p^m)}_{d_m}(\alpha_m)=T^{(p^m)}_{d_m/2+jf}(\alpha_m)$. Then we
get that
$$  T^{(p^m)}_{d_m/2+(p^{m-1}+1)/2\cdot f}(\alpha_m) =
T^{(p^m)}_{p^{m-1}f/2+(p^{m-1}+1)f/2}(\alpha_m)=
T^{(p^m)}_{f/2+d_m}(\alpha_m)= T^{(p^m)}_{f/2}(\alpha_m).$$
 Hence,
$T^{(p^m)}_{d_m/2}(\alpha_m)=T^{(p^m)}_{f/2}(\alpha_m)$. Thus, by
Lemma  \ref{l1} (ii) we obtain that
$T^{(p^m)}_{0}(\alpha_m)+1=T^{(p^m)}_{f/2}(\alpha_m)$. But the
latest equality is not possible for $m>1$ by Lemma \ref{l1} (iv).
\end{proof}
\smallskip

By Proposition \ref{prop2}, we only need to study the value of
$T_b^{(p^n)}(\alpha_n^{i})+T_{b+u}^{(p^n)}(\alpha_n^{i})$ for
integers $i$ in the set $p^{n-1}\mathbb{Z}_{p}$. Suppose $i=p^{n-1}
a,\;\  a\in D_{i}^{(p)}$. Then, it follows from Proposition
\ref{prop1} and Lemma \ref{l1} that
$$S(\alpha_n^{i})=1+T^{(p^n)}_b(\alpha_n^i)
+T^{(p^n)}_{b+u}(\alpha_n^i)=1+T^{(p)}_b(\alpha_1^a)
+T^{(p)}_{b+u}(\alpha_1^a)=1+
H^{(p)}_k(\alpha_1)+H^{(p)}_{k+u}(\alpha_1),$$ where  $k \equiv b +
i \pmod {f}$. The following proposition examines the value of
$H_k^{(p)}(\alpha_1)+H_{k+u}^{(p)}(\alpha_1)$ according to the
relation between $f$ and $\ord_p(2)$.

\begin{proposition}
    \label{l7} Let $p = ef + 1$ be an odd prime with $f$ being an even  positive integer and $v = \gcd(\frac{p-1}{\ord_p(2)}, f) $.
    Then,

\noindent(i) $\left| \Big\{k \in \mathbb{ Z}_{f}
\,|\,H_k^{(p)}(\alpha_1)+H_{k+u}^{(p)}(\alpha_1) =  0\Big\}\right| =
\begin{cases}
f,&  \text{if  } v = f,\\
0,&  \text{if } v | f/2 \text{ or } v=2, v\neq f.
\end{cases}$

 \noindent(ii)$\left| \Big\{k \in \mathbb{ Z}_{f}
\,|\,H_k^{(p)}(\alpha_1)+H_{k+u}^{(p)}(\alpha_1) =  1\Big\}\right| =
\begin{cases}
f,&  \text{if  } v = f/2,\\
0,&  \text{if  } v=1, \text{ or } v=f \text{ or } 2v|f/2.
\end{cases}$
\end{proposition}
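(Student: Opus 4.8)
The plan is to reduce everything to two structural identities for the quantities $h_k := H_k^{(p)}(\alpha_1)$ and then to one genuinely arithmetic lemma about Gaussian periods. First I would record the Frobenius identity: since $E_i^{(p)}(\alpha_1)^2 = E_i^{(p)}(\alpha_1^2) = E_{i+u}^{(p)}(\alpha_1)$ (because $2\equiv g^u$ and $g^uD_i^{(p)}=D_{i+u}^{(p)}$), squaring the half-sum gives $h_k^2 = h_{k+u}$. Together with Lemma~\ref{l1}(ii) at $m=1$, which reads $h_k + h_{k+f/2} = 1$, these are the only two relations I will use. Consequently $H_k^{(p)}(\alpha_1)+H_{k+u}^{(p)}(\alpha_1)=h_k+h_k^2$, so this sum is $0$ exactly when $h_k\in\mathbb{F}_2$ and is $1$ exactly when $h_k\in\mathbb{F}_4\setminus\mathbb{F}_2$ (i.e. $h_k^2+h_k+1=0$). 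Since $h_{k+u}=h_k^2$, the orbits of $k\mapsto k+u$ on $\mathbb{Z}_f$ are the residue classes modulo $v$, and along each orbit the $h_k$ form a single Frobenius orbit; in particular the property $h_k\in\mathbb{F}_2$ is constant on each orbit.

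Next I would dispose of the cases that follow purely from these two relations. If $v=f$ then $u\equiv0\pmod f$, so $h_k=h_k^2\in\mathbb{F}_2$ for all $k$, giving $f$ in (i) and $0$ in (ii). If $v=f/2$ then $u\equiv f/2$, so $h_{k+u}=1+h_k$ and $h_k+h_{k+u}=1$ for every $k$, giving $f$ in (ii) and $0$ in (i). If $v\mid f/2$ with $v\neq f$, then $f/2$ lies in the orbit of $0$; were some $h_k\in\mathbb{F}_2$ the whole orbit would be a constant element of $\mathbb{F}_2$, forcing $h_{k+f/2}=h_k$ and contradicting $h_{k+f/2}=1+h_k$, so (i) gives $0$. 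Finally, if $2v\mid f/2$ and some $h_k$ equalled a primitive cube root $\omega$, then writing $f/2\equiv mu\pmod f$ I would reduce the congruence $m(u/v)\equiv f/(2v)\pmod{f/v}$ modulo $2$ to see that $m$ is even; but then $h_{k+f/2}=h_k^{2^m}=\omega$, whereas $h_{k+f/2}=1+\omega=\omega^2$, a contradiction, so (ii) gives $0$.

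The remaining and main obstacle is the two cases with $f/2$ odd (i.e. $f\equiv2\pmod 4$): namely $v=2$ in (i) and $v=1$ in (ii). Here the two relations alone are consistent with a degenerate configuration, so a genuinely arithmetic input is required. I would show that in each case the assumption (some $h_k\in\mathbb{F}_2$ in (i); some $h_k=\omega$ in (ii)) propagates — using that $\langle 2u\rangle$ is exactly the even residues and that consecutive indices have opposite parity — to force $h_k+h_{k+1}=1$ for every $k$. Since $h_k+h_{k+1}=\eta_k+\eta_{k+f/2}$ with $\eta_i:=E_i^{(p)}(\alpha_1)$, this says that every order-$(f/2)$ Gaussian period $\theta_k:=\sum_{x\in g^kK}\alpha_1^x$, where $K=\langle g^{f/2}\rangle$ has order $2e$, equals $1$.

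The crux, which I expect to be the hard part, is the lemma that for $f>2$ the periods $\theta_k$ cannot all equal $1$. I would prove it by a polynomial/term-counting argument: "$\theta_k=1$ for all $k$" is equivalent to $Q(\alpha_1^{\,j})=0$ for every $j\in\mathbb{Z}_p^*$, where $Q(X)=1+\sum_{x\in K}X^x\in\mathbb{F}_2[X]$ has degree $p-1$ (note $-1\in K$). Thus $Q$ vanishes at all $p-1$ primitive $p$-th roots of unity, so the separable polynomial $\Phi_p(X)=1+X+\cdots+X^{p-1}$ divides $Q$; as $\deg Q\le p-1$ this forces $Q=\Phi_p$. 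But $Q$ has $1+|K|=1+2e$ monomials while $\Phi_p$ has $p=1+ef$, so $2e=ef$, i.e. $f=2$ — a contradiction. This eliminates the degenerate configuration and yields $0$ in both remaining cases, completing the proposition. (The hypothesis $f>2$ is precisely what keeps the cases $v=f/2$ and $v=1$ disjoint; for $f=2$ they coincide and the degenerate situation genuinely occurs.)
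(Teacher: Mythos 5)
Your proof is correct, and its skeleton --- the Frobenius identity $h_{k+u}=h_k^2$, the relation $h_k+h_{k+f/2}=1$ from Lemma~\ref{l1}(ii), and propagation along the orbits of $k\mapsto k+u$ --- is the same as the paper's. The substantive difference is that the paper outsources its two hardest steps to \cite{ELZH}: in (i) it simply cites that $H_k^{(p)}(\alpha_1)\notin\{0,1\}$ when $v\mid f/2$ or $v=2,\ v\neq f$, and in (ii), case $v=1$, it cites that $E_i^{(p)}(\alpha_1)+E_{i+f/2}^{(p)}(\alpha_1)=1$ for all $i$ is impossible. You prove both ingredients from scratch: the case $v\mid f/2$, $v\neq f$ falls immediately to the observation that $f/2$ lies in the additive orbit of $k$ (so a constant orbit would contradict $h_{k+f/2}=1+h_k$), and the two residual cases with $f/2$ odd ($v=2$ in (i), $v=1$ in (ii)) are both funneled, via the parity argument, into the single degenerate configuration $E_i+E_{i+f/2}=1$ for all $i$, which you eliminate with the term-counting argument: the polynomial $Q(X)=1+\sum_{x\in K}X^x$ would have all $p-1$ primitive $p$-th roots of unity as zeros, forcing $Q=\Phi_p$ and hence $1+2e=1+ef$, i.e.\ $f=2$. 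This makes your write-up self-contained where the paper is not (and in fact recovers, in compressed form, what \cite{ELZH} is being cited for). Your treatment also surfaces a genuine defect that the paper, leaning on the citation, never confronts: for $f=2$ the cases ``$v=f/2$'' and ``$v=1$'' of part (ii) coincide and assign conflicting values ($f$ versus $0$); since $E_0(\alpha_1)+E_1(\alpha_1)=1$ always, the correct count there is $f$, so the case ``$v=1$'' of (ii) should carry the hypothesis $f>2$ (equivalently $v=1\neq f/2$), exactly as your final parenthetical remark states.
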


\begin{proof}
        Since $\ord_p(2)  = \frac{p-1}{\gcd(p-1, \,u)}$,  it follows that $\gcd(u, f) = \gcd(\frac{(p-1)}{\ord_p(2)}, f) = v$ \cite{ELZH}.

    (i) For $v=f$ this statement is clear.

    Let $v|f/2$ or $v=2, v\neq f$. We shall prove this case by contradiction. Suppose $H_k^{(p)}(\alpha_1)+H_{k+u}^{(p)}(\alpha_1)=0$ for some integer $k$.
   Since  $(H_k^{(p)}(\alpha_1))^2 = H_{k+u}^{(p)}(\alpha_1)$, it
   follows that $H_k^{(p)}(\alpha_1)) \in \{0,1\}$. By \cite{ELZH}
   this is not possible for $v|f/2$ or $v=2, v\neq f$.

 (ii) For $v=f/2$ this statement is clear. If $v=f$ then $2\in
 D^{(p)}_0$ and we have
 $H_k^{(p)}(\alpha_1)+H_{k}^{(p)}(\alpha_1)=1$. This is
 impossible

    Suppose $H_k^{(p)}(\alpha_1)+H_{k+u}^{(p)}(\alpha_1)=1$ for some integer $k$. Without loss of generality,
    we assume $k=0$ and $H_0^{(p)}(\alpha_1) = H_{u}^{(p)}(\alpha_1)+1$.

    In the case when $v \neq f$. Since $\gcd(u, f) = \gcd(\frac{(p-1)}{\ord_p(2)}, f) = v$, by a similar argument as in the proof of Proposition \ref{prop2} we get
    $$
    H_0^{(p)}(\alpha_1) =   H_{2v}^{(p)}(\alpha_1) = \cdots = H_{2vi}^{(p)}(\alpha_1).
    $$
   So, if  $2v$ divides $f/2$, then $H_{f/2}^{(p)}(\alpha_1) = H_{{ 2v \cdot f/4v}}^{(p)}(\alpha_1) = H_0^{(p)}(\alpha_1),$ which is a contradiction.

 Let $v=1$.       Then we get
 $H_{i}^{(p)}(\alpha_1)+ H_{i+1}^{(p)}(\alpha_1)+1=0,
i=0,1,\dots,f-1$ and then $ E_{i}^{(p)}(\alpha_1)+
E_{i+f/2}^{(p)}(\alpha_1)+1=0, \quad i=0, 1,\dots,f-1. $ In
\cite{ELZH} it was shown  that this is  impossible.

\end{proof}

\noindent\textbf{Proof of Theorem \ref{t1}.} Recall that the linear
complexity of $s^{\infty}$ is given by
 $$
L(s^{\infty})=N-\deg \Big(\gcd \big((x^{p^n}-1)^2,S(x)\big)\Big).
$$

\noindent (i) From Proposition \ref{prop2} we know
$S(\alpha_n^{i})\neq 0$ for
 $i \in
\mathbb{Z}_{p^n}\setminus p^{n-1}\mathbb{Z}_{p}$. For the remaining
set $p^{n-1}\mathbb{ Z}_p$, if $i=0$, then $S(1)=1$; if $i \in
p^{n-1}\mathbb{ Z}_p^*$, we have
$$
S(\alpha_n^{i})=1+T^{(p^n)}_b(\alpha_n^i)
+T^{(p^n)}_{b+u}(\alpha_n^i)=1+T^{(p)}_b(\alpha_1^a)
+T^{(p)}_{b+u}(\alpha_1^a)=1+
H^{(p)}_b(\alpha_1^a)+H^{(p)}_{b+u}(\alpha_1^a)
$$ for some integer $a\in \mathbb{ Z}_p^*$.

Suppose $H^{(p)}_k(\alpha_1^a)+H^{(p)}_{k+u}(\alpha_1^a)=1$ for some
integer $k$. Then
$$
1 = (H^{(p)}_k(\alpha_1))^{2} + H^{(p)}_{k+u}(\alpha_1^{2}) =
H^{(p)}_{k+u}(\alpha_1)+H^{(p)}_{k+2u}(\alpha_1),
$$ and so on (here $u\not \equiv 0 \pmod{f})$).
So, we have
$$
|\{i:\;\ S(\alpha_n^{i})=0, i=1,2,\dots,p^n-1\}|= r\ord_p(2).
$$
where $r$ is an integer with $0\leq r \leq \frac{p-1}{\ord_p(2)}$.

Further, by \eqref{eq5} we see that
$$xS'(x)=\sum\limits_{j=1}^n\sum\limits_{i=0}^{d_j/2-1} \sum_{t
\in D^{(2p^j)}_{i+b \pmod{d_j}}} x^{p^{n-j}t}.$$

Hence, $\alpha_n^{i} S(\alpha_n^{i})=T^{(p^n)}_b(\alpha_n^i)$. So,
if $\alpha_n^{i}$ is a root of $S(x)$ and $S'(x)$ then $1+
T^{(p^n)}_b(\alpha_n^i)+ (T^{(p^n)}_{b}(\alpha_n^i))^2=0$ and
$T^{(p^n)}_b(\alpha_n^i)=0$. It is not possible and any root of
$S(x)$ is simple.

Then the statement of this theorem follows from Proposition
\ref{prop2}.

\smallskip

\noindent (ii) In this case
$$
S(\alpha_n^{i})= H^{(p)}_b(\alpha_1^a)+H^{(p)}_{b+u}(\alpha_1^a)
$$ for some integer $a\in \mathbb{ Z}_p^*$.

Then as earlier we again get
$$
|\{i:\;\ S(\alpha_n^{i})=0, i=1,2,\dots,p^n-1\}|= r\ord_p(2).
$$
where $r$ is an integer such that  $0\leq r \leq
\frac{p-1}{\ord_p(2)}$.

Here, by \eqref{eq5} we see that
$$x\widetilde{S}'(x)=\sum\limits_{j=1}^n\sum\limits_{i=0}^{d_j/2-1}\sum_{t
\in D^{(2p^j)}_{i+b \pmod{d_j}}} x^{p^{n-j}t}.$$
and also
$\alpha_n^{i} \widetilde{S}(\alpha_n^{i})=T^{(p^n)}_b(\alpha_n^i)$.
If $v = f$ then it follows from \cite{ELZH} that
$$ |\{i:\;\ T^{(p^n)}_b(\alpha_n^i)=0, i=1,2,\dots,p^n-1\}|= (p-1)/2.
$$
Then the statement of this theorem follows from Proposition
\ref{prop2}.

\smallskip

 \hfill$\square$

\subsection{Additional remark}
Let $p$ be  a Wieferich prime.  Wieferich primes are very rare
\cite{DK}, hence we could ignore these numbers but nonetheless we
show that the old method also works in this case.  In this
subsection we consider only the case when $f=2^r$, where $r$ is a
positive integer. Denote $D=\{k: \;\ 2^{p-1} \equiv 1 \pmod{p^k}\}$
and $wn=\max\limits_{k \in D} {k}$.

Suppose $2\equiv g^u \pmod {p^{nw}}$. Then $u \equiv 0 \pmod
{p^{nw-1}}$. Thus, $u=p^{nw-1} z$ where $\gcd(z,p)=1$. It is easy to
check that  $2\equiv g^{p^{j-1}z} \pmod {p^j}$ for $j\leq nw$.

Let $v=\gcd(z,f)$. First, we  study the value of
$T_k^{(p^j)}(\alpha_j^{i})$ for integers $i$ in the set
$\mathbb{Z}_{p^j}$. Let $T_k^{(p^j)}(\alpha_j^{i}) \in \{0,1\}$ and
$v\neq f$. Without loss of generality,
    we assume  $T_k^{(p^j)}(\alpha_j^{i})=0$. As earlier we
    obtain that $$0=T_k^{(p^j)}(\alpha_j^{i})=T_{k+lvp^{j-1}}^{(p^j)}(\alpha_j^{i}) \text{ for } l=0,1,2,\dots.$$
    Since $ vp^{j-1}$ divides $d_j/2=p^{j-1}f/2$ for $f=2^r$, we
    have a contradiction. So, $T_k^{(p^j)}(\alpha_j^{i}) \in
    \{0,1\}$ for $i\in \mathbb{Z}_{p^j}$ only when $v=f$

We consider a few cases.

(i) Suppose $v=f$. Then $2\in D_0^{(p^j)}$ for $j\leq nw$. In this
case $T^{(p^j)}_k(\alpha_j^i)=\bigl(T^{(p^j)}_k(\alpha_j^i)\bigr)^2$
and $T^{(p^j)}_k(\alpha_j^i)\in \{0,1\}$ for any $k$ and $i\in
\mathbb{Z}_{p^j}$. Thus, $S(\alpha_j^i)=1$ for $i\in
\mathbb{Z}_{p^j}$. Further, $\widetilde{S}(\alpha_j^i)= 0$ for $i\in
\mathbb{Z}_{p^j}, \; \ i\neq 0$ and $|\{i:
\widetilde{S}'(\alpha_j^i)=0, i=1,\dots, p^j-1\}|=(p^j-1)/2$.

(ii) Suppose $v=f/2$. In this case
$\bigl(T^{(p^j)}_k(\alpha_j^i)\bigr)^2=T^{(p^j)}_{k+d_j/2}(\alpha_j^i)$.
 Thus, by Lemma \ref{l1} $\widetilde{S}(\alpha_j^i)=1$ for $i\in
\mathbb{Z}_{p^j}$. Further, $S(\alpha_j^i)= 0$ and
$S'(\alpha_j^i)\neq 0$ for $i\in \mathbb{Z}_{p^j}, \; i\neq 0$.

(iii) $v\neq f/2,f$. Here
$\bigl(T^{(p^j)}_k(\alpha_j^i)\bigr)^2=T^{(p^j)}_{k+vp^{j-1}}(\alpha_j^i)$.
So, if  $T^{(p^j)}_k(\alpha_j^i)+T^{(p^j)}_{k+u}(\alpha_j^i)=0$ then
$T^{(p^j)}_k(\alpha_j^i)=T^{(p^j)}_{k+lvp^{j-1}}(\alpha_j^i)$ for
$l\geq 0$. Also, if
$T^{(p^j)}_k(\alpha_j^i)+T^{(p^j)}_{k+u}(\alpha_j^i)=1$ then
$T^{(p^j)}_k(\alpha_j^i)=T^{(p^j)}_{k+2lvp^{j-1}}(\alpha_j^i)$ for
$l\geq 0$.

Since $f=2^r$ and $v\neq f/2, f$, it follows that $2vp^{j-1}$
divides $p^{j-1}f/2$. We obtain a contradiction with Lemma \ref{l1}.

If $j\geq wn$ then $[\mathbb{F}_2(\alpha_{j+1}):
\mathbb{F}_2(\alpha_j)]=p,$ where $\alpha_j=\alpha_n^{p^{n-j}}$ and
$\alpha_n$ is a primitive $p^n$-th root of unity. In this case we
can use the method from \cite{ELZH} as  earlier.

Let $m=\min(n,wn)$.  So, for $f=2^r$ we can obtain  that
 the linear complexity of $s^{\infty}$  is given by

    $$
    L(s^{\infty}) = \begin{cases}
    2p^n-(p^{m}-1), & \text{ if } v = f/2 ; \\
    2p^n, & \text{ otherwise },
    \end{cases}
    $$
and the linear complexity of $\widetilde{s}^{\infty}$ for $n\geq wn$
is given by

    $$
    L(\tilde{s}^{\infty}) = \begin{cases}
    2p^n-3(p^{m}-1)/2, & \text{ if } v = f ; \\
    2p^n, & \text{ otherwise }.
    \end{cases}
    $$

\end{document}